	\def\CAT{{{\rm CAT}$(0)$}}
\theoremstyle{plain}
\newtheorem*{NewTheoremA}{Theorem A}	
\newtheorem*{NewCorollaryB}{Corollary B}
\newtheorem*{NewCorollaryC}{Corollary C}
\newtheorem*{Question}{Question}
\newtheorem*{Conjecture}{Conjecture}
\newtheorem{theorem}{Theorem}[section]
\newtheorem{proposition}[theorem]{Proposition}
\title[]{A note on locally elliptic actions on cube complexes}
\author{Nils Leder and Olga Varghese}
\date{\today}
\address{Nils Leder\\
Department of Mathematics\\
M\"unster University\\ 
Einsteinstra\ss e 62\\
48149 M\"unster (Germany)}
\email{n\_lede02@uni-muenster.de}
\address{Olga Varghese\\
Department of Mathematics\\
M\"unster University\\ 
Einsteinstra\ss e 62\\
48149 M\"unster (Germany)}
\email{olga.varghese@uni-muenster.de}
\begin{document}
\pagenumbering{arabic}
\begin{abstract}
We deduce from Sageev's results that whenever a group acts locally elliptically on a finite dimensional \CAT\ cube complex, then it must fix a point. 
As an application, we give an example of a group $G$ such that $G$ does not have property (T), but $G$ and all its finitely generated subgroups can not act without a fixed point on a finite dimensional \CAT\ cube complex, answering a question by Barnhill and Chatterji. 
\end{abstract}
\maketitle

\section{Introduction}
The questions we shall investigate in this note are concerned with fixed points on \CAT\ cube complexes. Roughly speaking, a cube complex is a union of cubes of any dimension which are glued together along isometric faces. Let $\mathcal{C}$ be a class of finite dimensional \CAT\ cube complexes. A group $G$ is said to have property ${\rm F}\mathcal{C}$ if any simplicial
action of $G$ on any member of $\mathcal{C}$ has a fixed point. For a subclass $\mathcal{A}$ consisting of simplicial trees the study of property ${\rm F}\mathcal{A}$ was initiated by Serre \cite{Serre}. 

Bass introduced a weaker property ${\rm F}\mathcal{A'}$ for groups in \cite{Bass}. A group has property ${\rm F}\mathcal{A'}$ if any simplicial action of $G$ on any member of $\mathcal{A}$ is locally elliptic, i.e. each $g\in G$ fixes some point on a tree. We define a generalization of property ${\rm F}\mathcal{A'}$. A group $G$ has property ${\rm F}\mathcal{C'}$ if any simplicial action of $G$ on any member of $\mathcal{C}$ is locally elliptic, i.e. each $g\in G$ fixes some point on a \CAT\ cube complex. 

It is well known that a finitely generated group which is acting locally elliptically on a simplicial tree has a global fixed point, see \cite[\S 6.5 Corollary 2]{Serre}. The following result is due to Sageev and follows from the proof of Theorem 5.1 in \cite{Sageev}.
\begin{NewTheoremA}
Let $G$ be a finitely generated group acting by simplicial isometries on 
a finite dimensional \CAT\ cube complex. If the $G$-action is locally elliptic, then $G$ has a global fixed point. 

In particular, a finitely generated group $G$ has property ${\rm F}\mathcal{C'}$ iff $G$ has property ${\rm F}\mathcal{C}$.
\end{NewTheoremA}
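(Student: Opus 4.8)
The plan is to reduce the statement to a purely combinatorial assertion about half-spaces and then to invoke the argument inside the proof of \cite[Theorem~5.1]{Sageev}. First I would pass to the cubical subdivision of $X$ (each edge cut in half): this leaves the underlying space unchanged, hence does not affect the existence of a global fixed point, and afterwards the $G$-action is without hyperplane inversions and an element is elliptic (fixes a point of $X$) precisely when it fixes a vertex --- an elliptic cubical isometry has finite order and fixes the barycentre of its minimal cube, which becomes a vertex upon subdivision. So I may assume $G$ acts without inversions. Next I would reformulate the hypothesis: via the standard dictionary between \CAT\ cube complexes and their pocsets of half-spaces, together with the semisimplicity of cubical isometries, an element $g\in G$ fixes a vertex if and only if it does not \emph{skewer} any hyperplane, i.e.\ there is no half-space $h$ with $g^{k}h\subsetneq h$ for some $k\geq 1$. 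So ``the $G$-action is locally elliptic'' becomes ``no element of $G$ skewers a hyperplane,'' and this is the form in which I would work.

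The main step --- and the one I expect to be the real obstacle --- proceeds by contradiction. Suppose $G$ has no global fixed point; fix a finite generating set $s_1,\dots,s_n$ and a vertex $v$. The orbit $Gv$ must be unbounded, for otherwise the circumcentre of $\overline{Gv}$ in the complete \CAT\ space $X$ would be $G$-fixed by the Bruhat--Tits fixed point theorem. It then remains to extract, as in the proof of \cite[Theorem~5.1]{Sageev}, a half-space $h$ and an element $g\in G$ with $gh\subsetneq h$; such a $g$ skewers $\hat h$ and so does not fix a vertex, contradicting local ellipticity. This extraction is the delicate point: a combinatorial geodesic from $v$ to a distant orbit point $g_0v$ crosses very many hyperplanes, finite dimensionality of $X$ caps the number of pairwise transverse ones, so arbitrarily long strictly nested chains $h_0\supsetneq h_1\supsetneq\cdots$ must occur; writing $g_0$ as a word in the $s_i$, each of which moves $v$ only a bounded amount, a pigeonhole/periodicity argument over the finite generating set produces an element mapping one such half-space strictly inside itself. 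Carrying out this bookkeeping cleanly (and reconciling it with Sageev's) is where I expect most of the effort to go. As a secondary device I would keep an induction on $\dim X$ in reserve: the case $\dim X\le 1$ is Serre's theorem \cite[\S6.5, Corollary~2]{Serre}, and if some hyperplane $\hat h$ is $G$-invariant one restricts to $\hat h$ --- a \CAT\ cube complex of dimension $\dim X-1$ on which the action is still locally elliptic, since an elliptic element of $G$ has bounded orbits in $X$ and hence in $\hat h$ --- so one may assume there is no $G$-invariant hyperplane. It is worth noting that the tree proof does not transfer verbatim: two elliptic isometries of a \CAT\ cube complex whose product is elliptic need not share a fixed point, so one genuinely uses local ellipticity of the whole group, not merely of pairs of generators.

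The concluding ``in particular'' is then formal. Property ${\rm F}\mathcal{C}$ trivially implies ${\rm F}\mathcal{C'}$. Conversely, if a finitely generated group $G$ has ${\rm F}\mathcal{C'}$ and acts simplicially on a member $X$ of $\mathcal C$, that action is locally elliptic, so the first part of the theorem yields a global fixed point on $X$; since $X\in\mathcal C$ was arbitrary, $G$ has ${\rm F}\mathcal{C}$.
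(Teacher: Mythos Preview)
Your outline is essentially the paper's own argument: unbounded orbit via Bruhat--Tits, a long combinatorial path built from translates of finitely many generator-geodesics, and a pigeonhole over that finite hyperplane set $\mathcal S$ to produce a hyperbolic element. The paper makes your ``bookkeeping'' step precise in two moves you should be aware of. First, a Ramsey-type lemma (their Proposition~2.1): among any $d+d(d+1)$ hyperplanes in a $d$-dimensional complex one finds three that are pairwise disjoint. Second, after pigeonholing so that at least $d+d(d+1)$ of the separating hyperplanes are $G$-translates of a single $J\in\mathcal S$, they extract three pairwise disjoint translates $J,\,gJ,\,hJ$ with $gJ$ separating the other two and then use the elementary fact that in this configuration one of $g,\,h,\,hg^{-1}$ must be hyperbolic. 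Note that \emph{two} nested translates of $J$ would not suffice --- the element carrying one to the other might flip the half-spaces --- so your ``pigeonhole/periodicity'' step really needs to land on three, and this is exactly the subtlety the paper's Proposition~2.1 is engineered to handle.

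Two small corrections. Your claim that an elliptic cubical isometry has finite order is false (think of an elliptic isometry of an infinite-valence tree); what is true, and all you need, is that it fixes the barycentre of its carrier cube, which becomes a vertex after one subdivision. And the preliminary subdivision together with the backup induction on $\dim X$ via a $G$-invariant hyperplane are not used in the paper's argument and can be dropped: the hyperplane bookkeeping works directly without assuming the action is inversion-free.
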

Before we state the corollaries of Theorem A, we observe that the result in Theorem A is not true for infinite dimensional \CAT\ cube complexes. Let $G$ be a finitely generated torsion group. Then, by Bruhat-Tits fixed point theorem \cite[Corollary II 2.8]{Haefliger} follows, that $G$ has property ${\rm F}\mathcal{C'}$ and thus by Theorem A the group $G$ has property ${\rm F}\mathcal{C}$. Free Burnside groups are finitely generated torsion groups and thus these groups have always property ${\rm F}\mathcal{C}$, but many of these groups act without a fixed point on infinite dimensional \CAT\ cube complexes, see \cite[Theorem 1]{Osajda}.

The next corollary follows from Theorem A and is known in a case of trees by a result of Tits \cite[Proposition 3.4]{Tits}.
\begin{NewCorollaryB}
Let $G$ be a group acting by simplicial isometries on a finite dimensional \CAT\ cube complex $X$. If the $G$-action is locally elliptic, then $G$ has a global fixed point in $X\cup\partial X$, where $\partial X$ denotes the visual boundary of $X$.
\end{NewCorollaryB}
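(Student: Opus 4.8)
The plan is to bootstrap from Theorem A, which already settles the finitely generated case, via a compactness-type argument on the bordification $\overline{X}=X\cup\partial X$ equipped with the cone topology. First I would record, for each finitely generated subgroup $H\leq G$, that the fixed-point set $X^{H}=\{x\in X:\ hx=x\ \text{for all }h\in H\}$ is nonempty: the restricted $H$-action on the finite dimensional \CAT\ cube complex $X$ is still locally elliptic, so this is exactly Theorem A. Moreover $X^{H}$ is closed and convex in $X$, since the fixed-point set of a single \CAT\ isometry is closed and convex and an arbitrary intersection of closed convex sets is again closed convex. Next I would observe that the family $\mathcal{F}=\{X^{H}:H\leq G\ \text{finitely generated}\}$ is filtering: for finitely generated $H_{1},H_{2}$ the subgroup $\langle H_{1},H_{2}\rangle$ is again finitely generated and $X^{\langle H_{1},H_{2}\rangle}\subseteq X^{H_{1}}\cap X^{H_{2}}$, so $\mathcal{F}$ consists of nonempty closed convex sets and lies cofinally below finite intersections.

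The key input is then the following fact about spaces of finite telescopic dimension, due to Caprace and Lytchak: in a complete \CAT\ space of finite telescopic dimension, a filtering family of nonempty closed convex subsets has nonempty intersection in the bordification, that is, $\bigcap_{H}\overline{X^{H}}\neq\emptyset$ where closures are taken in $X\cup\partial X$. To apply this I would note that a finite dimensional \CAT\ cube complex is a complete \CAT\ space whose telescopic dimension is bounded by the maximal dimension of a cube, hence finite. Fix a point $\xi\in\bigcap_{H}\overline{X^{H}}$.

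It remains to check that $\xi$ is a global fixed point. Each $g\in G$ lies in the finitely generated subgroup $\langle g\rangle$, so $\xi\in\overline{X^{\langle g\rangle}}$; since $g$ acts on $X\cup\partial X$ as a homeomorphism and fixes $X^{\langle g\rangle}$ pointwise, it fixes the closure $\overline{X^{\langle g\rangle}}$ pointwise, whence $g\xi=\xi$. As $g$ was arbitrary, $\xi$ is fixed by $G$. Finally, either $\xi\in X$, in which case $\xi\in\overline{X^{H}}\cap X=X^{H}$ for every finitely generated $H$ and so $\xi\in X^{G}$ is a genuine fixed point in $X$, or else $\xi\in\partial X$; in both cases $\xi$ is the desired global fixed point in $X\cup\partial X$. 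This recovers, in the cube complex setting, the dichotomy ``common point / common point at infinity'' that Tits establishes for filtering families of subtrees.

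The main obstacle is the second step: one really needs the finite-dimensionality hypothesis to prevent a filtering family of nonempty closed convex sets from escaping to infinity with no limit in $\overline{X}$, a phenomenon that does occur for infinite dimensional \CAT\ cube complexes — consistent with the earlier remark that Theorem A itself fails there — so some genuine geometric input beyond soft compactness is unavoidable. A secondary technical point to pin down is the verification that a finite dimensional \CAT\ cube complex has finite telescopic dimension, so that the Caprace–Lytchak result is indeed applicable; everything else is routine.
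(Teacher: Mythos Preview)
Your proposal is correct and follows essentially the same approach as the paper: bootstrap from Theorem~A to get nonempty closed convex fixed-point sets for finitely generated pieces, observe that these form a filtering family, and invoke the Caprace--Lytchak result to produce a common point in $X\cup\partial X$. The only cosmetic differences are that the paper indexes by finite subsets $S\subseteq G$ rather than finitely generated subgroups (equivalent, since $\mathrm{Fix}(S)=X^{\langle S\rangle}$), and the paper states the Caprace--Lytchak theorem directly for finite dimensional cube complexes rather than via telescopic dimension; you also spell out the verification that the limit point is actually $G$-fixed, which the paper leaves implicit.
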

\begin{proof}
For the proof we need the following result by Caprace \cite[Theorem 1.1]{Caprace}:

Let $X$ be a finite dimensional \CAT\ cube complex and $\left\{X_\alpha\right\}_{\alpha\in A}$ be a filtering family of closed convex non-empty subsets. Then either the intersection $\bigcap\limits_{\alpha\in A} X_\alpha$ is non-empty or the intersection of the visual boundaries $\bigcap\limits_{\alpha\in A}\partial X_\alpha$ is a non-empty subset of $\partial X$.

Recall that a family $\mathcal{F}$ of subsets of a given set is called \emph{filtering} if for all $E, F\in\mathcal{F}$ there exists $D\in\mathcal{F}$ such that $D\subseteq E\cap F$.

Let $X$ be a finite dimensional \CAT\ cube complex and $\Phi$ a simplicial action of $G$ on $X$. For $S\subseteq G$ we define the following set ${\rm Fix}(S)=\left\{x\in X\mid \Phi(s)(x)=x\text{ for all } s\in S\right\}$. The set ${\rm Fix}(S)$ is closed and convex. If $S$ is a finite set, then it follows by Theorem A that ${\rm Fix}(S)$ is non-empty. Further, we define ${\rm Fix}(G)^\partial=\left\{\xi\in\partial X\mid \Phi(g)(\xi)=\xi\text{ for all }g\in G\right\}.$

Now we consider the following family
$\mathcal{F}=\left\{{\rm Fix}(S)\mid S\subseteq G\text{ and }\#S<\infty\right\}.$ If $S,T \subseteq G$ are finite subsets, we have ${\rm Fix}(S \cup T) \subseteq {\rm Fix}(S) \cap {\rm Fix}(T)$ and thus $\mathcal{F}$ is a filtering. 
The result of Caprace stated above implies that 
\[
\bigcap\mathcal{F}={\rm Fix}(G)\text{ is non-empty }
\]
or 
\[
\bigcap\left\{\partial{\rm Fix}(S)\mid S\subseteq G\text{ and } \#S<\infty\right\}\subseteq{\rm Fix}(G)^\partial \text{ is non-empty.}
\] 
\end{proof}

Since the Davis realization of a right-angled building carries the structure of a finite-dimensional $\mathrm{CAT}(0)$ cube complex, we can apply Corollary B to confirm the following conjecture by Marquis \cite[Conjecture 2]{Marquis}) in the special case of right-angled buildings.

\begin{Conjecture}
Let $G$ be a group acting by type-preserving simplicial isometries on a building $\Delta$. If the $G$-action on the Davis realization $X$ of $\Delta$ is locally elliptic, then $G$ has a global fixed point in $X \cup \partial X$.
\end{Conjecture}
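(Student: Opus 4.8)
The plan is to deduce the statement, in the right-angled case, directly from Corollary B, so that the only real work lies in setting up the correct cube-complex structure and the corresponding action. Fix a right-angled building $\Delta$ of type $(W,S)$ and let $X$ be its Davis realization. As recalled just above, $X$ carries the structure of a \CAT\ cube complex whose cubes are indexed by the spherical residues of $\Delta$ --- equivalently, those whose type is a pairwise commuting subset of $S$ --- and whose dimension equals the largest size of such a subset; in particular $X$ is finite-dimensional (this is where one uses that $\Delta$ has finite rank, or at least uniformly bounded spherical subsets).

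First I would check that a type-preserving simplicial action of $G$ on $\Delta$ induces a simplicial isometric action of $G$ on the cube complex $X$. This is the step carrying essentially all of the content: one must verify that the passage from $\Delta$ to $X$ is functorial for type-preserving automorphisms. The point is that such an automorphism sends spherical residues to spherical residues of the same type, hence permutes the cubes of $X$ while respecting their type labels and attaching maps, and that the standard piecewise-Euclidean cubical metric on $X$ is canonically determined by the cell structure, so it is automatically preserved. I would extract the needed facts from Davis's description of the cubical structure on the Davis complex of a right-angled Coxeter group, together with the presentation of $X$ as a union of apartments --- copies of that Davis complex --- glued along common chambers.

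Once this is in place the argument is immediate: the hypothesis of the conjecture says precisely that the induced $G$-action on the finite-dimensional \CAT\ cube complex $X$ is locally elliptic, i.e. every $g \in G$ fixes a point of $X$, so Corollary B applies (note that $G$ need not be assumed finitely generated there) and gives a global fixed point of $G$ in $X \cup \partial X$. The main obstacle is thus entirely the first step --- the functoriality of the Davis realization and the verification that the induced cellular bijections are isometries for the cubical metric --- which is routine but requires bookkeeping of the labelled attaching data; everything after it is formal.
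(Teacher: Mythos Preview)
Your proposal is correct and follows exactly the paper's approach: the paper does not prove the general conjecture but only remarks that the right-angled case follows from Corollary~B via the finite-dimensional \CAT\ cube complex structure on the Davis realization, which is precisely what you do. Your added discussion of functoriality of the Davis realization under type-preserving automorphisms makes explicit a point the paper leaves tacit, but the argument is otherwise identical.
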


Another fixed point property of interest is  Kazhdan's property (T). Niblo and Reeves proved in \cite[Theorem B]{Niblo} that if a group $G$ has Kazhdan's property (T), then $G$ also has property ${\rm F}\mathcal{C}$. Barnhill and Chatterji raised the following question  \cite[Question 5.3 ]{Barnhill}.

\begin{Question}
Is ${\rm F}\mathcal{C}$ equivalent to (T), or does there exist a group $G$ such that $G$ does not have property (T), but $G$ and all its finite-index subgroups have property ${\rm F}\mathcal{C}$? 
\end{Question}

With the next result we can answer this question in the negative.
\begin{NewCorollaryC}
Let $G$ be the first Grigorchuk group. Then $G$ and all its finitely generated subgroups have property ${\rm F}\mathcal{C}$, but $G$ doesn't have property (T). In particular, all finite-index subgroups of $G$ also have property ${\rm F}\mathcal{C}$.
\end{NewCorollaryC}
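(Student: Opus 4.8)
The plan is to combine Theorem A with two standard facts about the first Grigorchuk group $G$: that $G$ is a finitely generated infinite torsion group (in fact a finitely generated $2$-group), and that $G$ is amenable, being of subexponential growth by Grigorchuk's theorem. Once these are in place, the corollary follows formally.

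First I would dispose of property ${\rm F}\mathcal{C}$. As already noted in the introduction, every finitely generated torsion group has property ${\rm F}\mathcal{C}$: if such a group acts by simplicial isometries on a finite dimensional \CAT\ cube complex $X$, then every element generates a finite cyclic group, which fixes a point of the complete \CAT\ space $X$ by the Bruhat-Tits fixed point theorem \cite[Corollary II 2.8]{Haefliger}, so the action is locally elliptic and Theorem A produces a global fixed point. Since $G$ is a finitely generated torsion group, $G$ has property ${\rm F}\mathcal{C}$. Any finitely generated subgroup $H \leq G$ is again a finitely generated torsion group, hence also has property ${\rm F}\mathcal{C}$; and since $G$ is finitely generated, every finite-index subgroup of $G$ is finitely generated by Schreier's lemma, and therefore has property ${\rm F}\mathcal{C}$ as well.

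It then remains to verify that $G$ does not have property (T), and here I would use amenability: an infinite discrete amenable group never has property (T), since amenability makes the trivial representation weakly contained in the regular representation, while property (T) would then force it to be a subrepresentation of the regular representation, which only happens when the group is finite. As $G$ is infinite and amenable, $G$ does not have property (T). Together with the previous paragraph this exhibits $G$ as a group answering the question of Barnhill and Chatterji \cite{Barnhill} in the negative.

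I do not expect a genuine technical obstacle: given Theorem A the whole content of the corollary is the input about the Grigorchuk group. The two places that warrant a little care are staying within the finite-generation hypothesis of Theorem A when passing to subgroups --- which is exactly why the statement speaks of finitely generated (and finite-index) subgroups rather than all subgroups --- and arguing the failure of property (T) via amenability, since the abelianization of $G$ is finite and the usual abelianization criterion does not apply.
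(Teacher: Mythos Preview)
Your proposal is correct and follows essentially the same approach as the paper's own proof: both argue that $G$, being a finitely generated infinite torsion group, has property ${\rm F}\mathcal{C}$ (via Bruhat--Tits plus Theorem~A), and that $G$ fails to have property~(T) because it is amenable. You simply spell out in more detail the steps the paper leaves implicit, such as the application of Schreier's lemma for finite-index subgroups and the reason why an infinite amenable group cannot have property~(T).
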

\begin{proof}
The first Grigorchuk group $G$ is a finitely generated infinite torsion group (see \cite{Grigorchuk}) and thus $G$ and all its finitely generated subgroups have property 
${\rm F}\mathcal{C}$. But $G$ does not have property (T) since $G$ is amenable, see \cite{Grigorchuk2}.
\end{proof}
Further, many free Burnside groups have property ${\rm F}\mathcal{C}$, but don't have property (T), see \cite[Theorem 1]{Osajda}. Other examples of groups with property ${\rm F}\mathcal{C}$ and without property (T) were given by Cornulier in \cite{Cornulier}.

\subsection*{Acknowledgement}
We would like to thank R\'emi Coulon for poiting us on Theorem 5.1 in \cite{Sageev}.

\section{Proof of Theorem A}
In this section we give a proof of Theorem A which is hidden in the proof of Theorem 5.1 in \cite{Sageev} by Sageev. For definitions and properties of \CAT\ cube complexes see \cite{Sageev}.

We first need the following result.
\begin{proposition}
\label{ThreeHyperplanes}
Let $X$ be a $d$-dimensional \CAT\ cube complex and $\mathcal S$ be a finite set of hyperplanes in $X$. If $\#\mathcal S\geq d+d\cdot(d+1)$, then there exist three hyperplanes in $\mathcal S$ that do not intersect pairwise.
\end{proposition}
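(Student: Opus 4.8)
\medskip
\noindent\textbf{Proof sketch.}
The plan is to apply twice the standard fact that in a $d$-dimensional \CAT\ cube complex any family of pairwise-intersecting hyperplanes has at most $d$ members: if $k$ hyperplanes pairwise cross, then they all pass through a common $k$-dimensional cube, and no cube has dimension exceeding $d$ (see \cite{Sageev}). Combined with a pigeonhole count, this will produce the required triple.

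First I would fix a subfamily $\mathcal{C}=\{h_1,\dots,h_k\}\subseteq\mathcal S$ that is maximal with respect to inclusion among the pairwise-intersecting subfamilies of $\mathcal S$; by the fact above $k\leq d$, and since $\#\mathcal S\geq d+d(d+1)\geq 1$ the family $\mathcal C$ is nonempty. By maximality, every $h\in\mathcal S\setminus\mathcal C$ fails to intersect at least one of $h_1,\dots,h_k$, since otherwise $\mathcal C\cup\{h\}$ would be a strictly larger pairwise-intersecting family. Hence, choosing for each such $h$ an index $\iota(h)$ with $h\cap h_{\iota(h)}=\emptyset$ and setting $N_i=\{h\in\mathcal S\setminus\mathcal C\mid \iota(h)=i\}$, one obtains a partition $\mathcal S\setminus\mathcal C=N_1\sqcup\dots\sqcup N_k$.

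Next comes the counting step. We have $\#N_1+\dots+\#N_k=\#\mathcal S-k\geq\bigl(d+d(d+1)\bigr)-d=d(d+1)$, and since there are only $k\leq d$ of these sets, the pigeonhole principle forces $\#N_{i_0}\geq d+1$ for some index $i_0$; by construction every hyperplane in $N_{i_0}$ is disjoint from $h_{i_0}$. Now I would invoke the standard fact a second time, this time inside $N_{i_0}$: as $\#N_{i_0}\geq d+1$, the hyperplanes of $N_{i_0}$ cannot all pairwise intersect, so there are $h',h''\in N_{i_0}$ with $h'\cap h''=\emptyset$. Since $h',h''\in\mathcal S\setminus\mathcal C$ while $h_{i_0}\in\mathcal C$, the hyperplanes $h_{i_0}$, $h'$, $h''$ are three distinct members of $\mathcal S$ that do not intersect pairwise, as desired.

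The argument is entirely elementary once the bound ``at most $d$ pairwise-intersecting hyperplanes'' is in hand, so I do not expect a genuine obstacle; the only points requiring care are the bookkeeping in the pigeonhole step so as to land on exactly the stated threshold $d+d(d+1)$ (replacing $\#\mathcal C$ by its upper bound $d$ in the subtraction, and then distributing the remaining $d(d+1)$ hyperplanes among at most $d$ classes), and remembering to use the dimension bound a \emph{second} time, inside the chosen pigeonhole class, rather than only once at the outset.
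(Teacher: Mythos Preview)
Your proposal is correct and follows essentially the same approach as the paper: pick a maximal pairwise-intersecting subfamily (of size at most $d$), distribute the remaining hyperplanes among at most $d$ classes according to which member of the maximal family they miss, and apply pigeonhole to find a class of size $\geq d+1$. The only cosmetic difference is that for the final step the paper invokes maximality of the chosen family to conclude that the large class cannot be pairwise-intersecting, whereas you re-apply the dimension bound directly; both yield the same pair $h',h''$, and the argument is otherwise identical.
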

\begin{proof}
Let $\mathcal{T}=\left\{J_1, \ldots, J_k\right\}\subseteq S$ be a maximal set of pairwise intersecting hyperplanes. Then by Helly's Theorem for \CAT\ cube complexes or \cite[Theorem 4.14]{Sageev} follows that $\bigcap\mathcal{T}$ is not empty. Further, since the dimension of $X$ is $d$ we have:  $k\leq d$. 
By maximality of $\mathcal{T}$, for each hyperplane $J \in\mathcal{S}-\mathcal{T}$ there exists $i=1, \ldots ,k$ such that $J \cap J_i = \emptyset$. This yields a well-defined map
$$q:\mathcal{S}-\mathcal{T} \rightarrow \{1, \ldots, k\}, J \mapsto \min\{i \mid J \cap J_i = \emptyset\}.$$
Let $B_i$ denote the preimage $q^{-1}(i)$ for $i=1, \ldots, n$. Since $\#\mathcal{S}\geq d+d\cdot(d+1)$ and $k\leq d$, we have $\#(\mathcal{S}-\mathcal{T}) \geq d \cdot (d+1)$. Thus, by the pigeon-hole principle there exists $j \in \{1, \ldots, k\}$ such that $\#B_j \geq d+1$. By maximality of $\mathcal{T}$, not all hyperplanes of $B_j$ intersect pairwise, i.e there are $H_1, H_2\in B_j $ such that $H_1\cap H_2=\emptyset$. Then, $J_j, H_1, H_2$ are three hyperplanes that do not intersect each other. 
\end{proof}

Now we are ready to prove Theorem A.
\begin{proof}[Proof of Theorem A]
Let $G$ be a finitely generated group with a symmetric generating set $Y=\left\{ g_1, \ldots, g_n\right\}$. Let $X$ be a  $d$-dimensional \CAT\ cube complex, $v\in X$ be a vertex and $G\rightarrow{\rm Isom}(X)$
be a simplicial locally elliptic action.

For $i=1,\ldots, n$ we choose a combinatorial geodesic $\lambda_i$ from $v$ to $g_{i}(v)$ . Further, we denote by $\mathcal{S}_i$  the set of hyperplanes crossed by $\lambda_i$. We have $\#\mathcal{S}_{i}=D(v, g_{i}(v))$, where we denote by $D$ the metric on the $1$-skeleton of $X$. Hence the union $\mathcal{S}:=\bigcup\limits_{i=1}^n\mathcal{S}_i$ is a finite set.

Let us assume that the action has no global fixed point. Then by the Bruhat-Tits fixed point Theorem  follows that the orbit of $v$ is unbounded. Thus, there exists $g\in G$ such that
\[
N:=D(v, g(v))\geq \#\mathcal{S}\cdot (d+d(d+1)).
\] 
Since $Y$ generates $G$, we can write $g=g_{i_1}\ldots g_{i_l}$ with $g_{i_j}\in Y$ for $i=1,\ldots, l$. We define 
\[
v_j:=g_{i_1}\ldots g_{i_j}(v) \text{ and } \gamma_j:=g_{i_1}\ldots g_{i_j}(\lambda_{i_{j+1}}).
\]
The map $\gamma_j$ is a combinatorial geodesic from $v_j$ to $v_{j+1}$. Hence $\alpha:=\gamma_l\ldots\gamma_1\lambda_{g_{i_1}}$ 
is a combinatorial path from $v$ to $g(v)$. Since $D(v, g(v))=N$, there exists a set of hyperplanes $\mathcal{T}=\left\{K_1,\ldots, K_N\right\}$ such that $\alpha$ crosses each hyperplane in $T$. 

By construction, for each $K_i$ in $\mathcal{T}$ there exists $J\in\mathcal{S}$ such that $K_i=hJ$ for some $h\in G$. By pigeon-hole principle  there exists a hyperplane $J\in\mathcal{S}$ such that
\[
\#\left\{K\in\mathcal{T}\mid \exists h \in G: K=hJ\right\}\geq d+d(d+1).
\]
By Proposition \ref{ThreeHyperplanes} there exist three hyperplanes $h_1J, h_2J$ and $h_3J$ in $\left\{K\in\mathcal{T}\mid \exists h \in G: K=hJ\right\}$ 
whose pairwise intersection is empty. But each of these hyperplanes is crossed precisely once by a combinatorial geodesic from $v$ to $g(v)$. Therefore one of these hyperplanes separates the other two. 

It is not  difficult to verify the following: If there exist a hyperplane $J\subseteq X$ and $g, h\in G$ such that $J, gJ, hJ$ do not intersect pairwise and $gJ$ separates $J$ and $hJ$, then $g, h$ or $hg^{-1}$ is hyperbolic.

This completes the proof.
\end{proof}

\end{document}